\numberwithin{equation}{section}
\let\OLDthebibliography\thebibliography
\renewcommand\thebibliography[1]{
	\OLDthebibliography{#1}
	\setlength{\parskip}{1pt}
	\setlength{\itemsep}{1pt plus 0.3ex}
}
\definecolor{ForestGreen}{rgb}{0.1,0.6,0.05}
\definecolor{EgyptBlue}{rgb}{0.063,0.1,0.6}
\def\reals{\mathbb R}
\def\wolp{\accentset{\circ}{W}_p^{1}}
\def\wlp{W_p^{1}}
\newtheorem{thm}{Theorem}[section]
\newtheorem{lemma}[thm]{Lemma}
\newtheorem{cor}[thm]{Corollary}
\theoremstyle{definition}
\newtheorem{remark}[thm]{Remark}
\title{On a property of the nodal set of least energy sign-changing solutions for quasilinear elliptic equations
	\\ \medskip}
\author[1]{Vladimir Bobkov\thanks{E-mail: \texttt{bobkov@kma.zcu.cz}}}
\affil[1]{{\small Department of Mathematics and NTIS, Faculty of Applied Sciences, University of West Bohemia, Univerzitn\'i 8, 306 14 Plze\v{n}, Czech Republic}}
\author[2]{Sergey Kolonitskii\thanks{E-mail: \texttt{s.kolonitsky@spbu.ru}}}
\affil[2]{{\small St.~Petersburg State University, 7/9 Universitetskaya nab., St.~Petersburg, 199034 Russia}}
\date{}
\begin{document}
 \maketitle

\begin{abstract}
	In this note we prove the Payne-type conjecture about the behaviour of the nodal set of least energy sign-changing solutions for the equation $-\Delta_p u = f(u)$ in bounded Steiner symmetric domains $\Omega \subset \mathbb{R}^N$ under the zero Dirichlet boundary conditions.
	The nonlinearity $f$ is assumed to be either superlinear or resonant. In the latter case, least energy sign-changing solutions are second eigenfunctions of the zero Dirichlet $p$-Laplacian in $\Omega$.
	We show that the nodal set of any least energy sign-changing solution intersects the boundary of $\Omega$. The proof is based on a moving polarization argument.
	
	\par
	\smallskip
	\noindent {\bf  Keywords}: 
	$p$-Laplacian; superlinear; second eigenvalue; least energy nodal solution; nodal set; Payne conjecture; polarization.
	
	\noindent {\bf MSC2010}: 
	35J92,	
	35B06,	
	49K30.	
\end{abstract}
 
\section{Introduction}
Consider the boundary value problem
\begin{equation}\label{D}
\tag{$\mathcal{D}$}
\left\{
\begin{aligned}
-\Delta_p u &= f(u) 
&&\text{in } \Omega, \\
u &= 0  &&\text{on } \partial \Omega,
\end{aligned}
\right.
\end{equation}
where $\Delta_p u := \text{div}(|\nabla u|^{p-2} \nabla u)$ is the $p$-Laplacian, $p>1$. 
We impose the following assumptions on $\Omega$:
\begin{itemize}\addtolength{\itemindent}{1em}
	\item[$(O_1)$] $\Omega \subset \mathbb{R}^N$ is a bounded connected open set, $N \geqslant 2$.
	\item[$(O_2)$]
	$\Omega$ is Steiner symmetric with respect to the hyperplane $H_0 := \{(x_1,\dots,x_N) \in \mathbb{R}^N: x_1 = 0\}$, see \cite[p.~14]{kawohl}.
	Equivalently, $\Omega$ is convex with respect to the axis $e_1$ (i.e., any line segment parallel to $e_1$ with endpoints in $\Omega$ is contained in $\Omega$) and symmetric with respect to $H_0$. 
	\item[$(O_3)$] If $\Omega$ satisfies the interior ball condition at $x \in \partial \Omega$, then $\partial \Omega$ is of class $C^{1,\varsigma}$ in a neighbourhood of $x$ for some $\varsigma \in (0,1)$.
	\item[$(O_4)$] $\Omega$ satisfies the exterior ball condition at any $x \in \partial \Omega$.
\end{itemize}

For instance, an open ball centred at the origin or a convex polytope which is symmetric with respect to $H_0$ satisfies $(O_1)-(O_4)$, but a spherical shell does not satisfy $(O_2)$. 
While $(O_1)$ and $(O_2)$ are principal for our arguments, the assumptions $(O_3)$ and $(O_4)$ can potentially be relaxed. 

\medskip
We will consider two main types of the nonlinearity $f: \mathbb{R} \to \mathbb{R}$:
\begin{enumerate}[leftmargin=0.5cm]
	\item[I.] The first type contains $f$ with \textit{superlinear} and subcritical behaviour and can be described by the following four assumptions:

\begin{itemize}\addtolength{\itemindent}{1em}
	\item[$(A_1)$] $f \in C^1(\mathbb{R} \setminus \{0\}) \cap C^{0, \gamma}_{\text{loc}}(\mathbb{R})$ for some $\gamma \in (0,1)$.
	\item[$(A_2)$] There exist $q \in (p,p^*)$ and $C>0$ such that $|sf'(s)|, |f(s)| \leqslant C (|s|^{q-1}+1)$ for all $s \in \mathbb{R}\setminus\{0\}$.
	Here $p^*=\frac{Np}{N-p}$ if $p<N$ and $p^*=+\infty$ if $p\geqslant N$.
	\item[$(A_3)$] $f'(s) > (p-1)\dfrac{f(s)}{s} > 0$ for all $s \in \mathbb{R}\setminus \{0\}$, and $\limsup\limits_{s \to 0} \dfrac{f(s)}{|s|^{p-2}s} < \lambda_1(\Omega)$, where $\lambda_1(\Omega)$ is the first eigenvalue of the zero Dirichlet $p$-Laplacian in $\Omega$, that is, 
	\begin{equation*}
	\lambda_1(\Omega) = \min_{u \in \mathcal{S}} \int_\Omega |\nabla u|^p \, dx,
	\end{equation*}
	where $\mathcal{S} := \{u \in \wolp(\Omega): \|u\|_{L^p(\Omega)} = 1\}$.	
	\item[$(A_4)$] There exist $s_0 > 0$ and $\theta > p$ such that $0 < \theta F(s) \leqslant s f(s)$ for all $|s| > s_0$, where 
	$$
	F(s) := \int_0^s f(t) \, dt.
	$$
\end{itemize}
The model case of the nonlinearity which satisfies $(A_1)-(A_4)$ is $f(u) = C |u|^{q-2}u$ for any $q \in (p, p^*)$ and $C>0$.

\item[II.] The second type of the nonlinearity $f$ is the \textit{resonant} case  $f(u) = \lambda_2(\Omega) |u|^{p-2}u$, where $\lambda_2(\Omega)$ is the second eigenvalue of the zero Dirichlet $p$-Laplacian in $\Omega$ which can be characterized as (see \cite{drabekrobinson}) 
$$
\lambda_2(\Omega) = \inf_{\mathcal{A} \subset \mathcal{F}_2} \sup_{u \in \mathcal{A}} \int_\Omega |\nabla u|^p \, dx,
$$
where $\mathcal{F}_2 := \{\mathcal{A} \subset \mathcal{S}: \text{there exists a continuous odd surjection } h: S^1 \to \mathcal{A}\}$ and $S^1$ stands for a circle in $\mathbb{R}^2$. 
\end{enumerate}

Weak solutions of \eqref{D} are critical points of the energy functional $E: \wolp(\Omega) \to \mathbb{R}$ defined as
$$
E[u] = \frac{1}{p} \int_\Omega |\nabla u|^p \, dx - \int_\Omega F(u) \, dx.
$$
The functional $E$ is weakly lower semicontinuous and belongs to $C^1(\wolp(\Omega))$. 

\begin{remark}\label{rem:regularity}
	Let $u \in \wolp(\Omega)$ be a weak solution of \eqref{D}. Then $u \in L^\infty(\Omega)$ (it can be shown using a bootstrap argument, see, e.g., \cite[Lemma 3.14]{drabekkufner}), and hence $u \in C^{1,\delta}_{\text{loc}}(\Omega)$, see \cite{tolksdorf}. 
	Moreover, if $x \in \partial \Omega$ has a neighbourhood $B(x,\varepsilon)$ such that $\partial \Omega \cap B(x,\varepsilon)$ is of class $C^{1,\varsigma}$ (e.g., under the  assumption $(O_3)$), then $u \in C^{1,\kappa}_{\text{loc}}(\Omega \cup (\partial \Omega \cap B(x,\varepsilon)))$, as it follows from \cite[Section 3]{lieberman}.
\end{remark}

In this note we will study weak \textit{nodal} (or, equivalently, \textit{sign-changing}) solutions of \eqref{D}, i.e., solutions $u \in \wolp(\Omega)$ such that $u = u^+ + u^-$, where $u^+:= \max\{u, 0\}$, $u^-:= \min\{u, 0\}$, and $u^\pm \not\equiv 0$ in $\Omega$.
In the resonant case $f(u) = \lambda_2(\Omega) |u|^{p-2}u$, any nontrivial solution (which is necessarily nodal) can be naturally referred as \textit{second eigenfunction}. 
Noting that $u^\pm \in \wolp(\Omega)$, it is not hard to see that any nodal solution of \eqref{D} belongs to the \textit{nodal Nehari set} 
\begin{align*}\label{def:nodalNehari}
\mathcal{M}(\Omega) := \Bigl\{u \in \wolp(\Omega):\, u^\pm \not\equiv 0, ~ 
E'[u] u^\pm  \equiv \int_\Omega |\nabla u^\pm|^p \, dx - \int_\Omega u^\pm \, f(u^\pm) \, dx = 0 \,\Bigr\}.
\end{align*}

In the superlinear case (i.e., $f$ satisfies $(A_1)-(A_4)$), among the set of all nodal solutions of \eqref{D} we will be interested in \textit{least energy} nodal solutions, which can be obtained as follows. \textit{Any} minimizer of the problem
\begin{equation}\label{minimization_problem}
E[u] \to \mathrm{min}; 
\quad 
u \in {\mathcal M}(\Omega)
\end{equation}
is a least energy nodal solution of \eqref{D}, see \cite{cosio,BartschWethWillem,bobkol2016}.

An analogous property also holds in the resonant case $f(u) = \lambda_2(\Omega) |u|^{p-2}u$. Namely, for \textit{any} function $u \in \mathcal{M}(\Omega)$ there exist nonzero $\alpha, \beta \in \mathbb{R}$ such that $\alpha u^+ + \beta u^-$ is a second eigenfunction of \eqref{D}. Indeed, since in the resonant case the problem \eqref{D} is homogeneous, we can assume that $\|u^\pm\|_{L^p(\Omega)} = 1$. Therefore, $u \in \mathcal{M}(\Omega)$ reads as $\lambda_2(\Omega) = \int_\Omega |\nabla u^\pm|^p \, dx$.
Consider the set
$$
\mathcal{A} := \Bigl\{v \in \wolp(\Omega):~ v = \alpha u^+ + \beta u^-,~  |\alpha|^p  + |\beta|^p = 1 \Bigr\}.
$$
It is not hard to see that $\mathcal{A} \subset \mathcal{F}_2$ and $\lambda_2(\Omega) = \int_\Omega |\nabla v|^p \, dx$ for any $v \in \mathcal{A}$.
If we suppose now that there is no second eigenfunction which belongs to $\mathcal{A}$, then we apply the deformation lemma of Ghoussoub \cite[Lemma~3.7]{ghoussoub} (see also the particular statement for our case in \cite[Proposition~2.2]{anoopdrabeksasi}) to generate $\widetilde{\mathcal{A}} \subset \mathcal{F}_2$ such that
$$
\lambda_2(\Omega) \leqslant \sup_{v \in \widetilde{\mathcal{A}}} \int_\Omega |\nabla v|^p \, dx < \sup_{v \in \mathcal{A}} \int_\Omega |\nabla v|^p \, dx = \lambda_2(\Omega),
$$
a contradiction. Since any second eigenfunction changes its sign in $\Omega$, we conclude that $\alpha, \beta \neq 0$.

The above-mentioned facts indicate that least energy nodal solutions of \eqref{D} under the assumptions $(A_1)-(A_4)$ and second eigenfunctions of the zero Dirichlet $p$-Laplacian are conceptually the same objects. 
Analogous observation links ground states of \eqref{D} (i.e., least energy solutions) with first eigenfunctions. See \cite{GrumiauParini} for rigorous results in this direction.

\medskip

Let us define the \textit{nodal set} of $u \in C(\Omega)$ as
\begin{equation*}
\mathcal{Z}(u) = \overline{\{x \in \Omega:~ u(x) = 0\}}.
\end{equation*}
If $u$ is a nodal solution of \eqref{D}, then $\mathcal{Z}(u)$ is nonempty. Moreover, the strong maximum principle (see, for instance, \cite{vazquez}) implies that $\mathcal{Z}(u)$ does not have isolated points.
However, we are not aware of the unique continuation property for nodal solutions of \eqref{D}, i.e., the fact that $\text{Int}(\mathcal{Z}(u)) = \emptyset$.

Connected components of $\Omega \setminus \mathcal{Z}(u)$ are called nodal domains of $u$.
Note that each least energy nodal solution of \eqref{D} has exactly two nodal domains. Indeed, the arguments of \cite[p.~1051]{cosio} can be easily adopted for the superlinear case, and the resonant case was treated in \cite{CuestaNodal}. 

\medskip

In this note we intend to prove that the nodal set $\mathcal{Z}(u)$ intersects the boundary of $\Omega$. 
Namely, we prove the following result.
\begin{thm}\label{thm:1}
	Assume that $\Omega$ satisfies $(O_1)-(O_4)$, and either $f(u) = \lambda_2(\Omega) |u|^{p-2}u$ or $(A_1)-(A_4)$ holds. 
	Let $u$ be a least energy nodal solution of \eqref{D}. Then
	\begin{equation}\label{eq:dist0}
	\mathrm{dist}(\mathcal{Z}(u), \partial \Omega) = 0.
	\end{equation}
\end{thm}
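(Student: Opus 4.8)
The plan is to argue by contradiction and to exploit polarizations of $u$ with respect to the family of hyperplanes $H_t := \{x_1 = t\}$ parallel to the symmetry hyperplane $H_0$. Suppose $\mathrm{dist}(\mathcal{Z}(u), \partial \Omega) =: d > 0$. Then $u$ has no zeros in the collar $\{x \in \Omega :\, 0 < \mathrm{dist}(x,\partial\Omega) < d\}$, so $u$ has a fixed sign near $\partial\Omega$. Since a least energy nodal solution has exactly two nodal domains and $\Omega$ is connected, one nodal domain contains a full neighbourhood of $\partial\Omega$ while the other is compactly contained in $\Omega$; relabelling, I take $\Omega^- := \{u < 0\}$ to be the interior one, so that $\overline{\Omega^-} \subset \Omega$. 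The goal is to contradict the minimality of $u$ (equivalently, its being a second eigenfunction) by producing, through polarization, another least energy solution whose structure is geometrically impossible.

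\textbf{Polarization preserves the minimizer class.} For a hyperplane $H$ with reflection $\sigma_H$ such that $\sigma_H(\Omega \cap H^+) \subseteq \Omega$, the polarization $u^H$ lies in $\wolp(\Omega)$. The key observation is that polarization commutes with taking positive and negative parts, $(u^H)^\pm = (u^\pm)^H$, a pointwise identity verified from the definitions of $\max$ and $\min$. Since polarization preserves $\int_\Omega |\nabla w|^p\,dx$ and $\int_\Omega \psi(w)\,dx$ for a nonnegative $w$ and any $\psi$ (equimeasurability plus the Dirichlet-norm identity), applying this to $w = u^+$ and $w = u^-$ shows that both Nehari identities defining $\mathcal{M}(\Omega)$ are preserved, together with the energy: $u^H \in \mathcal{M}(\Omega)$ and $E[u^H] = E[u]$. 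Hence $u^H$ is again a least energy nodal solution, and by Remark~\ref{rem:regularity} it is a $C^1_{\mathrm{loc}}$ solution of \eqref{D} with exactly two nodal domains. By the Steiner symmetry $(O_2)$, the admissibility condition $\sigma_{H_t}(\Omega \cap H_t^+) \subseteq \Omega$ holds for every $t \geqslant 0$, so this applies to the whole family $\{u^{H_t}\}_{t \geqslant 0}$.

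\textbf{Rigidity and the moving hyperplane.} For each admissible $t$, both $u$ and $u^{H_t}$ are $C^1$ solutions of \eqref{D} with equal energy, and I expect a strong comparison principle for the $p$-Laplacian to force the dichotomy that, on $\Omega \cap H_t^+$, either $u \geqslant u \circ \sigma_t$ everywhere or $u \leqslant u \circ \sigma_t$ everywhere. Near the top $x_1 = R := \sup_\Omega x_1$, where $u > 0$, the Hopf lemma (using $(O_4)$ and boundary $C^1$ regularity) fixes the second alternative for $t$ close to $R$. Sliding $t$ downward and tracking the alternative by a continuity argument, there are two possibilities. Either the alternative persists down to $t = 0$, forcing $u$ to be symmetric about $H_0$ and nonincreasing in $x_1$ on $\{x_1 \geqslant 0\}$; then its most negative values sit at large $|x_1|$, i.e. near $\partial\Omega$, contradicting $\overline{\Omega^-} \subset \Omega$. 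Or the alternative switches at some $t_0 \in (0,R)$, where $u$ becomes symmetric about the interior plane $H_{t_0}$; this is incompatible with the strict Steiner ``fatness'' of $\Omega$ at $H_0$ together with the boundary behaviour of $u$ on the cap $\{x_1 > t_0\}$. Either way one reaches a contradiction, establishing \eqref{eq:dist0}.

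\textbf{Main obstacle.} The delicate point is the rigidity step: the strong comparison and strong maximum principles for the quasilinear degenerate operator are subtle where $\nabla u$ vanishes, in particular along $\mathcal{Z}(u)$ and on the contact set $\{u = u\circ\sigma_t\}$, so the dichotomy must be obtained by working on each nodal domain where $u$ has a definite sign and then propagating the chosen alternative as $t$ varies. The second delicate issue is ruling out an interior symmetry plane $H_{t_0}$ (the degenerate case of the moving-plane method), which should follow from the strict geometry encoded in $(O_2)$ near $H_0$ and the Hopf estimate on the top cap.
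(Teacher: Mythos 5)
Your reduction and your second step coincide with the paper's: the paper also assumes \eqref{eq:dist1}, normalizes $u>0$ near $\partial\Omega$, and proves (Lemma~\ref{lemma_polarized_nehari_function}) that whenever $\mathrm{supp}\,P_a u\subset\overline\Omega$ the polarization $P_a u$ is again a least energy nodal solution. The divergence --- and the gap --- is in your third step. The dichotomy you ``expect'', namely that on each cap $\Omega\cap\Sigma_t^+$ either $u\geqslant u\circ\sigma_t$ everywhere or $u\leqslant u\circ\sigma_t$ everywhere, is precisely a strong comparison/unique continuation statement for the $p$-Laplacian, and for $p\neq 2$ it is not available in the generality you need: the operator degenerates where $\nabla u=0$, which happens in particular on $\mathcal{Z}(u)$, and $u$, $u\circ\sigma_t$ solve \eqref{D} with a sign-changing right-hand side, so none of the known comparison principles (which require, e.g., one-signed data, nonvanishing gradients, or $p$ close to $2$) applies across the nodal set. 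You flag this yourself as the ``main obstacle'', but it is not a technical detail to be patched later; it is the heart of the matter, and as written the sliding argument does not close. The subsequent alternatives (persistence down to $t=0$, or a switch at an interior $t_0$ forcing symmetry about $H_{t_0}$) inherit the same unproved rigidity, and in any case a one-sided inequality persisting to $t=0$ gives monotonicity, not the symmetry you invoke.

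The paper avoids interior comparison altogether. It fixes the concrete critical parameter $a=d_1/2$, where $d_1$ is the minimal $e_1$-distance from $\mathcal{Z}(u)$ to the part of $\partial\Omega$ lying to its left; Lemma~\ref{lemma_moving_polarization} shows $\mathrm{supp}\,P_a u\subset\overline\Omega$ for all $a\in[0,d_1/2]$ and that $\mathcal{Z}(P_{d_1/2}u)$ touches $\partial\Omega$, so $v:=P_{d_1/2}u$ is a least energy nodal solution whose nodal set reaches the boundary at $\sigma_{d_1/2}(x)$ for $x\in\partial\mathcal{Y}(u)$. The contradiction is then purely a boundary one: by Lemma~\ref{lemma_contact_set_properties} (Hopf's lemma for $u$ at $x\in\mathcal{Y}(u)$, using $(O_4)$) the set $\mathrm{supp}\,v^-$ satisfies an interior ball condition at $\sigma_{d_1/2}(x)$, whence $\partial v/\partial n>0$ there; but arbitrarily close boundary points lie in $\mathrm{supp}\,v^+$, where $\partial v/\partial n\leqslant0$, contradicting the $C^{1,\kappa}$ regularity up to the boundary provided by $(O_3)$ and Remark~\ref{rem:regularity}. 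To salvage your route, replace the comparison dichotomy by this first-touching argument: no comparison between $u$ and $u\circ\sigma_t$ in the interior is needed, only the fact that the polarization remains a solution and the Hopf lemma at the single boundary point where the polarized nodal set first arrives.
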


In the \textit{resonant} case with the classical Laplace operator, such type of results is connected with the   \textit{Payne conjecture} \cite[p.~467]{payne} which stated that the nodal set of second eigenfunctions of the zero Dirichlet Laplacian in any domain cannot be closed. 
It is known that the conjecture is not generally true, see counterexamples constructed in  \cite{HoffmannOstenhof} and \cite{fournais} for special domains. However, determination of classes of domains for which the Payne conjecture is valid remains an attractive problem. 
The interested reader will easily find various results in this direction in the literature.
Let us specially emphasize the work \cite{payne1973} where Payne proved his conjecture for planar sets which satisfy $(O_1)$ and $(O_2)$. Under the same assumptions, the general higher-dimensional case was treated in \cite{damascelliNodal}. 
Being proved by entirely different arguments, our Theorem~\ref{thm:1} represents the direct generalization of these results  for the nonlinear settings under the additional assumptions $(O_3)$ and $(O_4)$. 
In the resonant case with the $p$-Laplacian, the validity of the Payne conjecture was proposed as an open problem even in the case of a ball, see \cite[Remark~4.2]{anoopdrabeksasi}. Easily, a ball satisfies $(O_1)-(O_4)$, and hence Theorem~\ref{thm:1} applies.

In the \textit{superlinear} case with the Laplace operator, the Payne conjecture was proved in \cite{aftalion} for a ball in $\mathbb{R}^N$ with $N \geqslant 2$ and for a two-dimensional annulus. 
Considering the homogeneous superlinear case $f(u) = |u|^{q-2} u$ and smooth $\Omega \subset \mathbb{R}^2$, the authors of \cite{GrumiauTroestler2009} were able to prove \eqref{eq:dist0} by appealing to \cite{allesandrini} when $q$ is sufficiently close to $2$ and $\Omega$ is convex; the case of large $q$ was considered in \cite{grossi}. See also \cite{Bonhuere2008} for related questions. 
However, we were not aware of the results on the Payne conjecture for the superlinear case with the general $p$-Laplace operator. 

\medskip

This note is organized as follows. In Section \ref{section:aux}, we give several necessary facts about polarization of functions and sets. In Section \ref{section:proof}, we prove Theorem \ref{thm:1}. The proof of Theorem~\ref{thm:1} consists of two steps. 
In the first step, we show that there \textit{always} exists a least energy nodal solution of \eqref{D} which satisfies \eqref{eq:dist0}. Such a weak form of the Payne conjecture is proved using $(O_1)$ and $(O_2)$ only. The arguments are based on the analysis of the behaviour of the polarization of least energy nodal solutions with respect to moving hyperplanes.
In the second step, we prove that if \eqref{eq:dist0} does not hold for some least energy nodal solution, then the least energy nodal solution obtained in the previous step has a contradictory behaviour on $\partial \Omega$. 
Here the arguments are based on the Hopf maximum principle which requires us to use $(O_3)$ and $(O_4)$.
This finally implies that \textit{any} least energy nodal solution of \eqref{D} satisfies \eqref{eq:dist0}.
In Section \ref{section:discussion}, we discuss possible relaxations of the assumptions $(A_1)-(A_4)$.

\section{Auxiliary facts}\label{section:aux}

First we recall the notion of \textit{polarization} (or, equivalently, \textit{two-point rearrangement}) of sets and functions, see, e.g., \cite{brocksol,BartschWethWillem}.
Consider the hyperplane $H_a := \{x \in \mathbb{R}^N: x_1 = a \}$ where $x := (x_1,x_2,\dots,x_N)$, and let $\sigma_a(x) := (2a - x_1, x_2, \dots, x_N)$ be the reflection of $x$ with respect to $H_a$. 
Denote the half-spaces separated by $H_a$ as 
\begin{equation*}
\Sigma_a^+ := \{ x \in \mathbb{R}^N:~ x_1 > a \} 
\quad \text{and} \quad 
\Sigma_a^- := \{ x \in \mathbb{R}^N:~ x_1 < a \}.
\end{equation*}
We define two polarizations of a measurable set $\Omega$ with respect to $H_a$ as follows:
\begin{equation*}
P_a \Omega = 
\begin{cases}
\Omega \cap \sigma_a(\Omega) &\text{in } \Sigma_a^+,\\
\Omega \cup \sigma_a(\Omega) &\text{in } \Sigma_a^-,\\
\Omega &\text{on } H_a,
\end{cases}
\qquad 
\widetilde{P}_a \Omega = 
\begin{cases}
\Omega \cup \sigma_a(\Omega) &\text{in } \Sigma_a^+,\\
\Omega \cap \sigma_a(\Omega) &\text{in } \Sigma_a^-,\\
\Omega &\text{on } H_a.
\end{cases}
\end{equation*}
Here $\widetilde{P}_a$ is introduced only for simplicity of further arguments, since $\widetilde{P}_a \Omega = \mathbb{R}^N \setminus (P_a (\mathbb{R}^N \setminus \Omega))$. 

It is not hard to see that $P_a$ and $\widetilde{P}_a$ satisfy the following domain monotonicity property.
\begin{lemma}\label{lemma_domain_monotonicity}
	If $\Omega_1 \subset \Omega_2$, then $P_a \Omega_1 \subset P_a \Omega_2$ and $\widetilde{P}_a \Omega_1 \subset \widetilde{P}_a \Omega_2$.
\end{lemma}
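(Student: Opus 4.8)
The plan is to prove the domain monotonicity of the two polarizations directly from their definitions by checking the inclusion on each of the three regions $\Sigma_a^+$, $\Sigma_a^-$, and $H_a$ separately. The key observation is that the reflection $\sigma_a$ is itself monotone with respect to set inclusion: if $\Omega_1 \subset \Omega_2$, then $\sigma_a(\Omega_1) \subset \sigma_a(\Omega_2)$, since $\sigma_a$ is a bijection of $\mathbb{R}^N$. Combined with the elementary fact that set intersection and set union both preserve inclusion (i.e.\ $A_1 \subset A_2$ and $B_1 \subset B_2$ imply $A_1 \cap B_1 \subset A_2 \cap B_2$ and $A_1 \cup B_1 \subset A_2 \cup B_2$), the result should follow by a routine region-by-region verification.

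Concretely, for $P_a$ I would argue as follows. On $\Sigma_a^+$, we have $P_a \Omega_i = \Omega_i \cap \sigma_a(\Omega_i)$; from $\Omega_1 \subset \Omega_2$ and $\sigma_a(\Omega_1) \subset \sigma_a(\Omega_2)$ we get $\Omega_1 \cap \sigma_a(\Omega_1) \subset \Omega_2 \cap \sigma_a(\Omega_2)$, so $P_a \Omega_1 \subset P_a \Omega_2$ in $\Sigma_a^+$. On $\Sigma_a^-$, we have $P_a \Omega_i = \Omega_i \cup \sigma_a(\Omega_i)$, and the same two inclusions give $\Omega_1 \cup \sigma_a(\Omega_1) \subset \Omega_2 \cup \sigma_a(\Omega_2)$. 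On $H_a$, we simply have $P_a \Omega_i = \Omega_i$ and the inclusion is the hypothesis itself. Since the inclusion holds on each of the three regions whose union is all of $\mathbb{R}^N$, we conclude $P_a \Omega_1 \subset P_a \Omega_2$ globally. The argument for $\widetilde{P}_a$ is identical with the roles of $\Sigma_a^+$ and $\Sigma_a^-$ interchanged; alternatively, one can invoke the stated relation $\widetilde{P}_a \Omega = \mathbb{R}^N \setminus (P_a(\mathbb{R}^N \setminus \Omega))$ together with the fact that complementation reverses inclusions, which converts the $P_a$ statement into the $\widetilde{P}_a$ statement.

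I do not anticipate any genuine obstacle here: the statement is a purely set-theoretic monotonicity fact, and the only point requiring a moment's care is the monotonicity of $\sigma_a$ under inclusion, which is immediate because $\sigma_a$ is an involutive bijection. The mild subtlety worth flagging is that the polarizations are defined piecewise on the three regions, so the verification must be carried out on each piece and then assembled; but since the three regions partition $\mathbb{R}^N$ and the desired inclusion is established on each, the global inclusion follows without any interaction between the pieces.
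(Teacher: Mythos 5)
Your argument is correct: the region-by-region verification using the monotonicity of $\sigma_a$, $\cap$, and $\cup$ under inclusion is exactly the routine check the paper has in mind when it states the lemma with the remark ``it is not hard to see'' and omits the proof entirely. Nothing is missing.
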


The following result links the Steiner symmetry of $\Omega$ with polarizations of $\Omega$, see \cite[Lemma~6.3]{brocksol}. 
We give its proof for the sake of completeness. 
\begin{lemma}\label{lemma_Q_polarized0}
	$\Omega$ satisfies $(O_2)$ if and only if $P_a \Omega = \Omega$ for all $a \geqslant 0$ and $\widetilde{P}_a \Omega = \Omega$ for all $a \leqslant 0$.                       
\end{lemma}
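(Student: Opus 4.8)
The plan is to reduce the statement to a one-dimensional problem along lines parallel to $e_1$, since both the polarizations and the Steiner symmetry are governed by the reflection $\sigma_a$, which acts only on the first coordinate. For $x' = (x_2,\dots,x_N)$ I would introduce the slice $\Omega_{x'} := \{t \in \mathbb{R} : (t,x') \in \Omega\}$, which is open because $\Omega$ is open. The key observation is that $\sigma_a$ fixes $x'$ and sends $t \mapsto 2a-t$, while $\Sigma_a^\pm$ and $H_a$ depend on $x_1$ only; hence $P_a\Omega$ and $\widetilde{P}_a\Omega$ are computed slice by slice, and $(O_2)$ is exactly the assertion that every nonempty $\Omega_{x'}$ is a bounded symmetric open interval $(-r(x'),r(x'))$. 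Thus the lemma decouples into an equivalent one-dimensional statement for each fixed $x'$.

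The first real step is to rewrite the set identities as pointwise conditions on the slices. Writing $\sigma_a\Omega_{x'} = \{2a-t : t \in \Omega_{x'}\}$ and examining $P_a\Omega$ on $\Sigma_a^+$ (where it equals $\Omega \cap \sigma_a\Omega$) and on $\Sigma_a^-$ (where it equals $\Omega \cup \sigma_a\Omega$), I expect $P_a\Omega = \Omega$ to be equivalent to the single implication
\[
t \in \Omega_{x'},\ t > a \ \Longrightarrow\ 2a - t \in \Omega_{x'} \qquad \text{for all } x' \text{ and all } t;
\]
the contributions from $\Sigma_a^+$ and $\Sigma_a^-$ collapse to the same condition after the substitution $s = 2a-t$, and $H_a$ imposes no constraint. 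Symmetrically, $\widetilde{P}_a\Omega = \Omega$ should reduce to: $t \in \Omega_{x'}$ and $t < a$ imply $2a - t \in \Omega_{x'}$. Getting the directions of the implications right—which inclusion is automatic for the intersection and which for the union, and on which half-space—is the step most needing care, though it is elementary once the set operations are written out on a single slice.

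For the forward implication I would assume $(O_2)$, so each slice is $(-r,r)$, and verify the two conditions directly: for $a \geq 0$ and $a < t < r$ a short computation gives $2a-t \in (-r,r)$, using $a \geq 0$ for the lower bound and $t > a$ together with $r > a$ for the upper bound; the case $a \leq 0$ for $\widetilde{P}_a$ is the mirror image.

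The reverse implication carries the content and is where I expect the main obstacle. Assuming the slice conditions for all $a \geq 0$ (for $P_a$) and all $a \leq 0$ (for $\widetilde{P}_a$), I would first take $a = 0$: the $P_0$ condition gives $t \in \Omega_{x'} \Rightarrow -t \in \Omega_{x'}$ for $t > 0$, and the $\widetilde{P}_0$ condition gives the same for $t < 0$, so each slice is symmetric. Then, fixing any $t_0 \in \Omega_{x'}$ with $t_0 > 0$, I would let the reflection plane vary: for every $a \in [0,t_0)$ the $P_a$ condition yields $2a - t_0 \in \Omega_{x'}$, and as $a$ sweeps $[0,t_0)$ the value $2a-t_0$ sweeps $[-t_0,t_0)$, whence $[-t_0,t_0] \subseteq \Omega_{x'}$. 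Since $t_0$ is an arbitrary positive element of the slice, letting $t_0 \to \sup \Omega_{x'}$ shows $\Omega_{x'} = (-R,R)$ with $R = \sup\Omega_{x'}$, which is $(O_2)$. The delicate points to watch are that openness of $\Omega$ rules out degenerate slices and legitimizes the passage to the supremum, and that it is the \emph{continuum} of hyperplanes $H_a$, not any single one, that converts the two-point rearrangement conditions into genuine convexity along $e_1$.
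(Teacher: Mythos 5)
Your proposal is correct and takes essentially the same route as the paper: both arguments reduce the set identities to the action of $\sigma_a$ on the one-dimensional sections of $\Omega$ parallel to $e_1$, where $(O_2)$ becomes the statement that each nonempty section is a symmetric interval. The only difference is cosmetic and lies in the sufficiency part: the paper detects a failure of $(O_2)$ with a single well-chosen hyperplane (the midplane $a=(b_1+c_1)/2$ between a point $b\in\Omega$ and a missing point $c$ of its segment, which forces $P_a\Omega\neq\Omega$), whereas you sweep $a$ over $[0,t_0)$ to fill in the segment directly --- the two are contrapositive formulations of the same idea.
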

\begin{proof}
	Recall that $\Omega$ is Steiner symmetric with respect to the hyperplane $H_0$ if and only if for any point $(b_1,b_2,\dots,b_N) \in \Omega$ the whole line segment $\{(s,b_2,\dots,b_N) \in \mathbb{R}^N: |s| \leqslant |b_1|\}$ is a subset of $\Omega$.
	Since every segment of this kind is stable under $P_a$ for $a \geqslant 0$ and $\widetilde{P}_a$ for $a \leqslant 0$, we get the necessary part of the lemma.
	
	To prove the sufficient part we suppose, by contradiction, that there exist $b \in \Omega$ and $c \in \{(s,b_2,\dots,b_N) \in \mathbb{R}^N: |s| \leqslant |b_1|\}$ such that $c \not\in \Omega$. Assume, without loss of generality, that $b_1 > 0$, and take $a = (b_1 + c_1)/2 \geqslant 0$. 
	Then we see that $P_a$ will exchange the points $c$ and $b$, that is, $P_a \Omega \neq \Omega$. 
	It is a contradiction.
\end{proof}

We will use $P_a$ to polarize functions. 
Let $v$ be a measurable function defined on the whole $\reals^N$. 
Define the polarization of $v$ with respect to $H_a$ as
\begin{equation*}
(P_a v)(x) = 
\begin{cases}
\min \{v(x), v(\sigma_a(x))\}, &x \in \Sigma_a^+,\\
\max \{v(x), v(\sigma_a(x))\}, &x \in \Sigma_a^-,\\
v(x), &x \in H_a.
\end{cases}
\end{equation*}
It is known that $(P_a v)^\pm = P_a(v^\pm)$, see \cite[Lemma~2.1]{BartschWethWillem}. Therefore, 
\begin{equation}\label{eq:espantion_of_polarization_positive_negative}
P_a (v^+ + v^-) = P_a v = (P_a v)^+ + (P_a v)^- = P_a (v^+) + P_a (v^-),
\end{equation}
and we may write $P_a v^\pm$ for short.
It is not hard to see that $\mathrm{supp}\, P_a v^+ = P_a (\mathrm{supp}\, v^+)$ and $\mathrm{supp}\, P_a v^- = \widetilde{P}_a (\mathrm{supp}\, v^-)$. 
Therefore, we deduce from \eqref{eq:espantion_of_polarization_positive_negative} that 
\begin{equation}\label{eq:decomposition_of_supp}
\mathrm{supp}\, P_a v = P_a(\mathrm{supp}\, v^+) \cup \widetilde{P}_a(\mathrm{supp}\, v^-).
\end{equation}

The following result easily follows from Lemmas~\ref{lemma_domain_monotonicity} and \ref{lemma_Q_polarized0}.
\begin{cor}\label{lemma_Q_polarized}
	Let $\Omega$ satisfy $(O_2)$ and $v$ be such that $\mathrm{supp}\, v \subset \overline{\Omega}$. Then $\mathrm{supp}\, P_a v^+ \subset \overline{\Omega}$ for all $a \geqslant 0$ and $\mathrm{supp}\, P_a v^- \subset \overline{\Omega}$ for all $a \leqslant 0$.
\end{cor}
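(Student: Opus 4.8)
The plan is to reduce the statement to the two preceding lemmas applied to the closed set $\overline{\Omega}$ rather than to $\Omega$ itself. First I would record the two structural identities established just above the corollary, namely $\mathrm{supp}\, P_a v^+ = P_a(\mathrm{supp}\, v^+)$ and $\mathrm{supp}\, P_a v^- = \widetilde{P}_a(\mathrm{supp}\, v^-)$. These translate the claim about supports of polarized functions into a claim about polarizations of the sets $\mathrm{supp}\, v^\pm$, both of which are contained in $\overline{\Omega}$ by hypothesis, since $\mathrm{supp}\, v^\pm \subset \mathrm{supp}\, v \subset \overline{\Omega}$.

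Next, for $a \geqslant 0$ I would simply chain the domain monotonicity of Lemma~\ref{lemma_domain_monotonicity} with the fixed-set description of Lemma~\ref{lemma_Q_polarized0}. From $\mathrm{supp}\, v^+ \subset \overline{\Omega}$, monotonicity yields $P_a(\mathrm{supp}\, v^+) \subset P_a(\overline{\Omega})$; and provided $\overline{\Omega}$ satisfies $(O_2)$, Lemma~\ref{lemma_Q_polarized0} gives $P_a \overline{\Omega} = \overline{\Omega}$ for $a \geqslant 0$, so that $\mathrm{supp}\, P_a v^+ \subset \overline{\Omega}$. The assertion for $v^-$ with $a \leqslant 0$ is entirely parallel, with $P_a$ replaced by $\widetilde{P}_a$ throughout and using $\widetilde{P}_a \overline{\Omega} = \overline{\Omega}$ for $a \leqslant 0$.

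The one point that genuinely needs an argument — and which I would single out as the main, though minor, obstacle — is that Lemma~\ref{lemma_Q_polarized0} is phrased for $\Omega$, whereas here I need $P_a \overline{\Omega} = \overline{\Omega}$ and $\widetilde{P}_a \overline{\Omega} = \overline{\Omega}$, i.e.\ I must first know that $\overline{\Omega}$ itself satisfies $(O_2)$. I would verify this directly from the segment characterization used in the proof of Lemma~\ref{lemma_Q_polarized0}: symmetry of $\overline{\Omega}$ with respect to $H_0$ holds because $\sigma_0$ is a homeomorphism with $\sigma_0(\Omega) = \Omega$, hence $\sigma_0(\overline{\Omega}) = \overline{\Omega}$; and convexity of $\overline{\Omega}$ in the $e_1$ direction follows by approximating any $(b_1,b_2,\dots,b_N) \in \overline{\Omega}$ by points of $\Omega$ and passing to the limit along the associated segments, the two endpoints $s = \pm b_1$ being covered by the $H_0$-symmetry just established. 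Once $\overline{\Omega}$ is known to satisfy $(O_2)$, the corollary follows immediately from the chaining in the previous paragraph.
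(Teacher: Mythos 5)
Your proposal is correct and follows exactly the route the paper intends: the paper gives no written proof, stating only that the corollary ``easily follows'' from Lemma~\ref{lemma_domain_monotonicity} and Lemma~\ref{lemma_Q_polarized0} together with the identities $\mathrm{supp}\, P_a v^\pm = P_a(\mathrm{supp}\, v^+),\ \widetilde{P}_a(\mathrm{supp}\, v^-)$ recorded just above it. Your extra observation that one must pass from $\Omega$ to $\overline{\Omega}$ in Lemma~\ref{lemma_Q_polarized0}, verified via the segment characterization and a limiting argument, is a legitimate and correctly handled detail that the paper leaves implicit.
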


It is known that if $v \in \wlp(\mathbb{R}^N)$, then $P_a v \in \wlp(\mathbb{R}^N)$, see \cite[Lemma~5.3]{brocksol} applied to $v^\pm$. 
Moreover,
\begin{align*}
&\int_{\mathbb{R}^N} |\nabla P_a v^\pm|^p \,dx = \int_{\mathbb{R}^N} |\nabla v^\pm|^p \,dx, 
\\
&\int_{\mathbb{R}^N} P_a v^\pm f(P_a v^\pm) \,dx = \int_{\mathbb{R}^N} v^\pm f(v^\pm) \,dx, 
\quad \int_{\mathbb{R}^N} F(P_a v^\pm) \,dx = \int_{\mathbb{R}^N} F(v^\pm) \,dx,
\end{align*}
see \cite[Lemmas~2.2 and 2.3]{BartschWethWillem}. 
Furthermore, it is possible to polarize functions from $\wolp(\Omega)$, where $\Omega$ is an open set, by considering their trivial extension to $\mathbb{R}^N$.
Resulting functions belong to $\wolp(P_a \Omega \cup \widetilde{P}_a \Omega)$, as it follows from \cite[Corollary~5.1]{brocksol}.
This result can be clarified in the following way. 
Let $v \in \wolp(\Omega) \cap C(\Omega)$ and let $\mathcal{O}^+ := \{x \in \Omega: v(x) > 0\}$ and $\mathcal{O}^- := \{x \in \Omega: v(x) < 0\}$. Then \cite[Lemma~5.6]{CuestaFucik} implies that $v^\pm \in \wolp(\mathcal{O}^\pm)$, and therefore $P_a v \in \wolp(P_a \mathcal{O}^+ \cup \widetilde{P}_a \mathcal{O}^-)$. 
Recalling now \eqref{eq:decomposition_of_supp}, we arrive at the following fact. 
\begin{lemma}\label{lemma_polarized_nehari_function}
	Let $\Omega$ satisfy $(O_1)$. 
	Let $u \in \mathcal M(\Omega) \cap C(\Omega)$ and $\mathrm{supp}\, P_a u \subset \overline{\Omega}$ for some $a \in \mathbb{R}$.
	Then $P_a u \in \mathcal M(\Omega)$ and $E[P_a u] = E[u]$. 
	In particular, if $u$ is a minimizer of \eqref{minimization_problem} such that $\mathrm{supp}\, P_a u \subset \overline{\Omega}$, then $P_a u$ is also a minimizer of \eqref{minimization_problem}, that is, $u$ and $P_a u$ are both least energy nodal solutions of \eqref{D}.
\end{lemma}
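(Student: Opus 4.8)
The plan is to transfer the three invariances of polarization recorded above (preservation of $\int|\nabla \cdot|^p$, of $\int (\cdot) f(\cdot)$, and of $\int F(\cdot)$) from $\mathbb{R}^N$ to $\Omega$, using the hypothesis $\mathrm{supp}\, P_a u \subset \overline{\Omega}$ to identify integrals over $\mathbb{R}^N$ with integrals over $\Omega$.

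First I would settle the membership $P_a u \in \wolp(\Omega)$. Writing $\mathcal{O}^+ = \{x \in \Omega : u(x) > 0\}$ and $\mathcal{O}^- = \{x \in \Omega : u(x) < 0\}$, the discussion preceding the lemma gives $P_a u \in \wolp(P_a \mathcal{O}^+ \cup \widetilde{P}_a \mathcal{O}^-)$; since $\mathcal{O}^\pm \subset \Omega$, Lemma~\ref{lemma_domain_monotonicity} places the polarized sets inside $P_a \Omega \cup \widetilde{P}_a \Omega$. Read through \eqref{eq:decomposition_of_supp}, the hypothesis $\mathrm{supp}\, P_a u \subset \overline{\Omega}$ guarantees that $P_a u$ does not protrude from $\overline{\Omega}$, and combined with the above this yields $P_a u \in \wolp(\Omega)$. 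This is the step I expect to be the main obstacle: only $(O_1)$ is assumed, so there is no symmetry of $\Omega$ to exploit, and the conclusion rests entirely on the support condition together with the cited facts on the action of polarization on $\wolp$.

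Next I would verify that $P_a u$ lies in the nodal Nehari set. Since $(P_a u)^\pm = P_a u^\pm$ (see \eqref{eq:espantion_of_polarization_positive_negative}) and $u^\pm \not\equiv 0$, the gradient invariance $\int_{\mathbb{R}^N} |\nabla P_a u^\pm|^p \, dx = \int_{\mathbb{R}^N} |\nabla u^\pm|^p \, dx$ forces $(P_a u)^\pm \not\equiv 0$. Because both $u^\pm$ and $P_a u^\pm$ are supported in $\overline{\Omega}$, each integral over $\mathbb{R}^N$ coincides with the corresponding integral over $\Omega$; chaining the two invariances
\begin{equation*}
\int_\Omega |\nabla P_a u^\pm|^p \, dx = \int_{\mathbb{R}^N} |\nabla u^\pm|^p \, dx = \int_{\mathbb{R}^N} u^\pm f(u^\pm) \, dx = \int_\Omega P_a u^\pm \, f(P_a u^\pm) \, dx,
\end{equation*}
where the relation $u \in \mathcal{M}(\Omega)$ is used in the middle, shows that $P_a u \in \mathcal{M}(\Omega)$.

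Finally, for the energy I would split $E$ into its $\pm$ parts: the supports of $\nabla P_a u^+$ and $\nabla P_a u^-$ are disjoint, so $\int_\Omega |\nabla P_a u|^p \, dx = \sum_\pm \int_\Omega |\nabla P_a u^\pm|^p \, dx$, and since $F(0) = 0$ likewise $\int_\Omega F(P_a u) \, dx = \sum_\pm \int_\Omega F(P_a u^\pm) \, dx$. Applying the gradient invariance and $\int_{\mathbb{R}^N} F(P_a u^\pm) \, dx = \int_{\mathbb{R}^N} F(u^\pm) \, dx$, together with the support identification of the domains of integration, gives $E[P_a u] = E[u]$. The ``in particular'' claim is then immediate: if $u$ minimizes \eqref{minimization_problem}, then $P_a u \in \mathcal{M}(\Omega)$ with $E[P_a u] = E[u] = \min_{\mathcal{M}(\Omega)} E$, so $P_a u$ is a minimizer as well and hence a least energy nodal solution of \eqref{D}.
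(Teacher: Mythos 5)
Your proposal is correct and follows essentially the same route as the paper, which states this lemma without a separate proof precisely because it is assembled, as you do, from the polarization invariances of $\int|\nabla\cdot|^p$, $\int(\cdot)f(\cdot)$, $\int F(\cdot)$, the identity \eqref{eq:espantion_of_polarization_positive_negative}, the $\wolp$-membership fact $P_a u \in \wolp(P_a\mathcal{O}^+\cup\widetilde{P}_a\mathcal{O}^-)$, and the support hypothesis identifying integrals over $\mathbb{R}^N$ with integrals over $\Omega$. The step you flag as the main obstacle is exactly the one the paper also treats as immediate via \eqref{eq:decomposition_of_supp}, so nothing is missing relative to the authors' own argument.
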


\section{Proof of the main result}\label{section:proof}

Throughout this section $u$ denotes a least energy nodal solution of \eqref{D} and we suppose, by contradiction, that the nodal set of $u$ does not intersect the boundary of $\Omega$, i.e,
\begin{equation}\label{eq:dist1}
\mathrm{dist}(\mathcal{Z}(u), \partial \Omega) =: d > 0.
\end{equation}
We will assume, in particular, that $u > 0$ in $\{x \in \Omega: \text{dist}(x, \partial \Omega) < d\}$, and derive a contradiction.

Notice that if $u$ is negative near $\partial \Omega$, then $v := -u$ is a solutions of 
\begin{equation*}
\left\{
\begin{aligned}
-\Delta_p v &= -f(-v) 
&&\text{in } \Omega, \\
v &= 0  &&\text{on } \partial \Omega.
\end{aligned}
\right. 
\end{equation*}
Since $g(s) := -f(-s)$ satisfies $(A_1)-(A_4)$, we can apply the arguments from below to $v$. Alternatively, we can apply the following arguments to $u$ using $\widetilde{P}_a$ instead of $P_a$.

\medskip

The idea of the proof of Theorem \ref{thm:1} is based on the following simple observation. Considering the polarization $P_a u$, we continuously increase $a \geqslant 0$ until a moment $d_1/2$ (see below) when $\mathcal{Z}(P_{d_1/2} u)$ touches $\partial \Omega$ for the first time. From Lemma \ref{lemma_polarized_nehari_function} we see that $P_{d_1/2} u$ is a least energy nodal solution of \eqref{D}. However, we show that $P_{d_1/2} u$ cannot be a solution since it contradicts the Hopf maximum principle at a special point of the boundary. The details are as follows.

\medskip
As the first step of the proof of Theorem \ref{thm:1}, we show that apart from $u$, \eqref{D} possesses a least energy nodal solution $v$ such that $\mathrm{dist}(\mathcal{Z}(v), \partial \Omega) = 0$.

Consider the $e_1$-distance between $\mathcal{Z}(u)$ and the left part of $\partial \Omega$ (see Fig.\ \ref{fig1}):
$$
d_1 := \min\left\{\alpha > 0:~ (x_1, x_2,\dots,x_N) \in \mathcal{Z}(u)
~\text{ and }~
(x_1 - \alpha,x_2,\dots,x_N) \in \partial \Omega \right\}.
$$
We have $d_1 \geqslant d > 0$. 
Denote the subset of $\mathcal{Z}(u)$ which delivers the minimum to $d_1$ as $\mathcal{Y}(u)$, i.e., 
\begin{equation}\label{def:Y(u)}
\mathcal{Y}(u) := \left\{(x_1, x_2,\dots,x_N) \in \mathcal{Z}(u):~ (x_1 - d_1,x_2,\dots,x_N) \in \partial \Omega \right\}.
\end{equation}
Finally, we define the relative boundary of $\mathcal{Y}(u)$ as 
$$
\partial \mathcal{Y}(u) = \{x \in \mathcal{Y}(u): \text{ any neighbourhood of } x \text{ contains } y \in \mathcal{Z}(u) \setminus \mathcal{Y}(u)\}.
$$
It is not hard to see that $\mathcal{Y}(u)$ and $\partial \mathcal{Y}(u)$ are not empty. 

\begin{lemma}\label{lemma_moving_polarization}
	Let $(O_1)$ and $(O_2)$ be satisfied. 
	Then $\mathrm{supp}\, P_a u \subset \overline{\Omega}$ for all $a \in [0,d_1/2]$ and $\mathrm{dist}(\mathcal{Z}(P_{d_1/2} u), \partial \Omega) = 0$. 
	In particular, $P_a u$ is a least energy nodal solution of \eqref{D} for all $a \in [0,d_1/2]$.
\end{lemma}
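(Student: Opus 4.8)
The plan is to split the statement into three parts: the support inclusion $\mathrm{supp}\, P_a u \subset \overline\Omega$ for $a \in [0,d_1/2]$, the touching property $\mathrm{dist}(\mathcal Z(P_{d_1/2}u),\partial\Omega)=0$, and the conclusion that each $P_a u$ is again a least energy nodal solution. The last item I would get for free: once the support inclusion holds, Lemma~\ref{lemma_polarized_nehari_function} applies to $u\in\mathcal M(\Omega)\cap C(\Omega)$ (a minimizer of \eqref{minimization_problem}) and yields that $P_a u$ is again a minimizer. For the support inclusion I would invoke the decomposition \eqref{eq:decomposition_of_supp}, $\mathrm{supp}\, P_a u = P_a(\mathrm{supp}\, u^+)\cup \widetilde P_a(\mathrm{supp}\, u^-)$. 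The positive part is immediate: Corollary~\ref{lemma_Q_polarized} gives $\mathrm{supp}\, P_a u^+\subset\overline\Omega$ for all $a\ge 0$. Only the negative part $\widetilde P_a(\mathrm{supp}\, u^-)$ needs work, and this is exactly where $d_1$ and the restriction $a\le d_1/2$ enter.

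The geometric observation I would isolate is that $\mathrm{supp}\, u^-$ lies at $e_1$-distance at least $d_1$ from the left part of $\partial\Omega$. Writing $x=(x_1,x')$ with $x'=(x_2,\dots,x_N)$, assumption $(O_2)$ makes each slice of $\Omega$ a symmetric interval $(-\ell(x'),\ell(x'))$. Since $u>0$ within distance $d$ of $\partial\Omega$ by \eqref{eq:dist1}, on each line $\{(t,x')\}$ the function $u$ is positive near the left endpoint $-\ell(x')$, so it cannot be negative before its first zero; by the definition of $d_1$ that first zero sits at $t\ge -\ell(x')+d_1$, whence every $z=(z_1,x')\in\mathrm{supp}\, u^-$ satisfies $z_1\ge -\ell(x')+d_1$. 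For $a\in[0,d_1/2]$ the only points of $\widetilde P_a(\mathrm{supp}\, u^-)$ not already lying in $\mathrm{supp}\, u^-\subset\overline\Omega$ are reflections $\sigma_a(z)$ of points $z\in\mathrm{supp}\, u^-$ with $z_1<a$; their first coordinate is $2a-z_1\le d_1-z_1\le \ell(x')$, so $\sigma_a(z)\in\overline\Omega$. This settles $\mathrm{supp}\, P_a u\subset\overline\Omega$, and with Lemma~\ref{lemma_polarized_nehari_function} it proves that $P_a u$ is a least energy nodal solution for all $a\in[0,d_1/2]$.

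For the touching property I would polarize exactly at $a=d_1/2$ and track where the zeros on $\mathcal Y(u)$ are sent. If $x=(x_1,x')\in\mathcal Y(u)$ then $x_1=-\ell(x')+d_1$ and $\sigma_{d_1/2}(x)=(\ell(x'),x')\in\partial\Omega$ is exactly the right endpoint of the slice. To obtain genuine \emph{interior} zeros of $P_{d_1/2}u$ accumulating at the boundary, I would pick $x\in\partial\mathcal Y(u)$ (nonempty) and a sequence $z^{(k)}\in\mathcal Z(u)\setminus\mathcal Y(u)$ with $z^{(k)}\to x$. Each $z^{(k)}=(z_1^{(k)},\zeta^{(k)})$ is an interior zero of $u$ lying at $e_1$-distance \emph{strictly} greater than $d_1$ from the left boundary, i.e. $d_1-z_1^{(k)}<\ell(\zeta^{(k)})$, so its reflection $w^{(k)}:=\sigma_{d_1/2}(z^{(k)})=(d_1-z_1^{(k)},\zeta^{(k)})$ lies strictly inside the slice, hence $w^{(k)}\in\Omega$, while $w^{(k)}\to\sigma_{d_1/2}(x)\in\partial\Omega$. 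For large $k$ one has $z^{(k)}\in\Sigma_{d_1/2}^-$ and $w^{(k)}\in\Sigma_{d_1/2}^+$, so $(P_{d_1/2}u)(w^{(k)})=\min\{u(w^{(k)}),u(z^{(k)})\}=\min\{u(w^{(k)}),0\}$; and since $w^{(k)}$ is within distance $d$ of $\partial\Omega$ for large $k$ we get $u(w^{(k)})>0$, whence $(P_{d_1/2}u)(w^{(k)})=0$. Thus the $w^{(k)}$ are interior zeros of $P_{d_1/2}u$ converging to $\partial\Omega$, giving $\mathrm{dist}(\mathcal Z(P_{d_1/2}u),\partial\Omega)=0$.

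The hard part, and the reason the relative boundary $\partial\mathcal Y(u)$ is introduced, is precisely this last step: a point $x\in\mathcal Y(u)$ reflects to a boundary point $\sigma_{d_1/2}(x)$, but a boundary point is not a member of $\mathcal Z(\cdot)$, which is the closure of \emph{interior} zeros. One must therefore approximate it by reflections of zeros lying strictly farther than $d_1$ from the left wall, which land strictly inside $\Omega$; combining the strict inequality $d_1-z_1^{(k)}<\ell(\zeta^{(k)})$ (from $z^{(k)}\notin\mathcal Y(u)$) with the positivity of $u$ near $\partial\Omega$ (to force the minimum to vanish) is the delicate interplay that drives the proof. The support estimate of the first part, by comparison, is a routine slice computation.
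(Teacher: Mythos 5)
Your proposal is correct and follows essentially the same route as the paper: the decomposition $\mathrm{supp}\,P_a u = P_a(\mathrm{supp}\,u^+)\cup\widetilde P_a(\mathrm{supp}\,u^-)$ with Corollary~\ref{lemma_Q_polarized} for the positive part, the observation that $\mathrm{supp}\,u^-$ sits at $e_1$-distance at least $d_1$ from the left boundary (which you make explicit via the slice half-lengths $\ell(x')$, where the paper argues directly from $(O_2)$ and the definition of $d_1$), and the approximation of $\sigma_{d_1/2}(x)$, $x\in\partial\mathcal Y(u)$, by reflected interior zeros $\sigma_{d_1/2}(z^{(k)})$ with $z^{(k)}\in\mathcal Z(u)\setminus\mathcal Y(u)$. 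Your closing remark correctly identifies why $\partial\mathcal Y(u)$ is needed, and this matches the paper's proof of the touching property exactly.
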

\begin{proof}
	Recalling \eqref{eq:decomposition_of_supp}, we will study the behaviour of $\mathrm{supp}\, P_a u^+$ and $\mathrm{supp}\, P_a u^-$ with respect to $a$. First, Corollary~\ref{lemma_Q_polarized} implies that $\mathrm{supp}\, P_a u^+ \subset \overline{\Omega}$ for all $a \geqslant 0$. 
	Consider now $\mathrm{supp}\, P_a u^-$. 
	Take any $x \in \mathrm{supp}\, u^-$. Then, in view of \eqref{eq:dist1}, the assumption that $u > 0$ in $\{x \in \Omega: \text{dist}(x, \partial \Omega) < d\}$, and the definition of $d_1$, we see that $(x_1 - d_1,x_2,\dots,x_N) \in \overline{\Omega}$. Moreover, $(x_1 - d_1,x_2,\dots,x_N) \in \partial \Omega$ if and only if $x \in \mathcal{Y}(u)$. 
	Hence, from $(O_2)$ we get 
	\begin{align}
	\notag
	\sigma_{a}(x)	&= (2a - x_1,x_2,\dots,x_N) \in \overline{\Omega}, \quad \forall a \in [0, d_1/2],
	\\
	\label{eq:sigma_d_1/2}
	\sigma_{d_1/2}(x)	&= (d_1 - x_1,x_2,\dots,x_N) \in \partial \Omega \quad \iff \quad x \in \mathcal{Y}(u).
	\end{align}
	Consequently, $\widetilde{P}_a x \in \overline{\Omega}$ for all $a \in [0, d_1/2]$. 
	Since $x \in \mathrm{supp}\, u^-$ is arbitrary, we recall that $\mathrm{supp}\, P_a u^- = \widetilde{P}_a(\mathrm{supp}\, u^-)$ and hence conclude that $\mathrm{supp}\, P_a u^- \subset \overline{\Omega}$ for all $a \in [0, d_1/2]$. 
	
	Let us take some $x \in \partial \mathcal{Y}(u)$. 
	By definition, any small neighbourhood of $x$ contains $y \in \mathcal{Z}(u) \setminus \mathcal{Y}(u)$. Therefore, $\sigma_{d_1/2}(y) \in \Omega$. 
	On the one hand, since $u > 0$ near $\partial \Omega$, we see that $u(\sigma_{d_1/2}(y)) > 0$. On the other hand, $u(y) = 0$. Thus, $P_{d_1/2}$ will exchange $y$ and $\sigma_{d_1/2}(y)$, and hence $P_{d_1/2} u(\sigma_{d_1/2}(y)) = 0$. 
	Since a neighbourhood of $x$ is arbitrary, we conclude that $\mathrm{dist}(\mathcal{Z}(P_{d_1/2} u), \partial \Omega) = 0$.
	
	Finally, applying Lemma \ref{lemma_polarized_nehari_function}, we see that $P_a u$ is a least energy nodal solution of \eqref{D} for all $a \in [0,d_1/2]$.
\end{proof}

\begin{remark}
	Lemma~\ref{lemma_moving_polarization} is reminiscent of the moving plane method of Serrin \cite{serrin}, with the difference that, instead of the reflection, the polarization of functions is used.
\end{remark}

\begin{figure}[!h]
	\centering
	\includegraphics[width=0.7\linewidth]{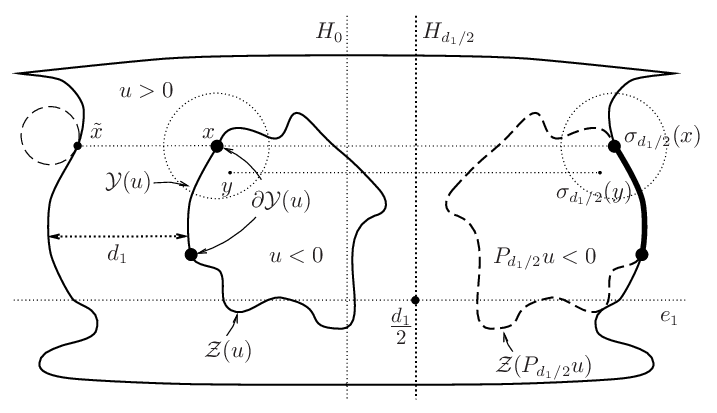}
	\caption{Nodal sets of $u$ and $P_{d_1/2} u$.}
	\label{fig1}
\end{figure}

\medskip
As the second step of the proof of Theorem \ref{thm:1}, we show that under the assumption \eqref{eq:dist1} the least energy nodal solution $P_{d_1/2} u$ has a contradictory behaviour on $\partial \Omega$. 
We start with the following auxiliary result.
 \begin{lemma}\label{lemma_contact_set_properties}
	Let $(O_1)$ and $(O_4)$ be satisfied, and let $x \in \mathcal{Y}(u)$. Then $|\nabla u(x)| \neq 0$. 
	Moreover, $\mathcal{Z}(u)$ is of class $C^2$ in a neighbourhood of $x$, and hence $\mathrm{supp}\,u^\pm$ satisfies the interior ball condition at $x$.
\end{lemma}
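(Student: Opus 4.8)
The plan is to reduce the whole statement to the single claim that $\nabla u(x) \neq 0$, and to treat everything else as consequences of elliptic regularity. Indeed, suppose $\nabla u(x) \neq 0$ is already known. Since $u$ is $C^1$ near $x$ by Remark~\ref{rem:regularity}, the gradient is nonzero in a whole neighbourhood $W$ of $x$, so on $W$ the operator $\Delta_p$ is uniformly (non-degenerately) elliptic and $-\Delta_p u = f(u)$ becomes a quasilinear uniformly elliptic equation whose right-hand side $f(u)$ is of class $C^{0,\gamma}$ by $(A_1)$ (resp.\ locally Hölder in the resonant case). Schauder theory then yields $u \in C^2(W)$, and since $\nabla u \neq 0$ the implicit function theorem shows that $\{u = 0\} = \mathcal{Z}(u)$ is a $C^2$ hypersurface in $W$. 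In particular $\mathcal{Z}(u)$ has empty interior near $x$, the sets $\mathrm{supp}\,u^{\pm}$ lie on the two sides of this $C^2$ surface, and hence satisfy the interior ball condition at $x$. Thus the entire lemma follows once $\nabla u(x)\neq 0$ is established.

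To prove $\nabla u(x) \neq 0$ I would combine the exterior ball condition $(O_4)$ with the horizontal shift by $d_1$. Put $x^\ast := (x_1 - d_1, x_2, \dots, x_N)$, which lies on $\partial\Omega$ because $x \in \mathcal{Y}(u)$; by $(O_4)$ there is a ball $B = B(z,r) \subset \reals^N \setminus \overline{\Omega}$ with $x^\ast \in \partial B$. Set $B' := B + d_1 e_1$, so that $x \in \partial B'$. I claim $B' \cap \{u < 0\} = \emptyset$. Indeed, if some $w \in B'$ had $u(w) < 0$, then $w \in \mathrm{supp}\,u^-$ and, as established in the proof of Lemma~\ref{lemma_moving_polarization}, the left translate $w - d_1 e_1$ would lie in $\overline{\Omega}$; but $w - d_1 e_1 \in B \subset \reals^N \setminus \overline{\Omega}$, a contradiction. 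Hence $u \geq 0$ on $B'$.

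Next I would shrink $B'$ towards its boundary point $x$ to a ball $\tilde B \subset B'$ with $x \in \partial \tilde B$ and $\overline{\tilde B} \subset \Omega$, which is possible since $\mathrm{dist}(x, \partial\Omega) \geq d > 0$; as $\tilde B \subset B'$ we still have $u \geq 0$ in $\tilde B$ and $u(x) = 0$. Because $f(s) \geq 0$ for $s \geq 0$ (which holds both in the superlinear case, by $(A_3)$ together with $f(0)=0$, and in the resonant case), we get $-\Delta_p u = f(u) \geq 0$ in $\tilde B$, i.e.\ $u$ is a nonnegative $p$-supersolution vanishing at the boundary point $x$ of $\tilde B$. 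Applying the strong maximum principle together with the Hopf boundary point lemma for the $p$-Laplacian \cite{vazquez} then gives the following dichotomy: either $u \equiv 0$ in $\tilde B$, or $u > 0$ in $\tilde B$ and the outer normal derivative satisfies $\partial u/\partial\nu(x) < 0$, whence $\nabla u(x) \neq 0$.

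I expect the genuinely delicate point to be excluding the degenerate alternative $u \equiv 0$ on $\tilde B$, which is precisely a failure of unique continuation localised at $x$ (recall the paper does not assume unique continuation). Here I would use that $x \in \mathcal{Y}(u) \subset \overline{\mathrm{supp}\,u^-}$, so there are points $y_n \to x$ with $u(y_n) < 0$, necessarily lying outside $\tilde B$. If $u \equiv 0$ on $\tilde B$, then $u$ cannot be positive immediately across $x$ on the $\tilde B$-side, and since $f(s) \leq 0$ for $s \leq 0$ the function $u$ would be a $p$-subsolution attaining an interior maximum equal to $0$ at $x$; the strong maximum principle would then force $u \equiv 0$ in a full neighbourhood of $x$, contradicting $u(y_n) < 0$. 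This rules out the degenerate case and completes the proof that $\nabla u(x) \neq 0$, from which, as explained in the first paragraph, the $C^2$ regularity of $\mathcal{Z}(u)$ and the interior ball condition for $\mathrm{supp}\,u^\pm$ at $x$ follow. The main obstacle is thus not the Hopf argument itself but the careful geometric bookkeeping needed to guarantee the sign configuration around $x$ that makes this exclusion rigorous.
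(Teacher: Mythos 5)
Your overall strategy coincides with the paper's: use the exterior ball at $x - d_1 e_1$ guaranteed by $(O_4)$, translate it by $d_1 e_1$ to obtain a ball inside $\Omega$ touching $x$, apply the Hopf lemma there to get $|\nabla u(x)|\neq 0$, and then obtain the $C^2$ regularity of $\mathcal{Z}(u)$ and the interior ball condition from nondegenerate ellipticity and the implicit function theorem. The reduction in your first paragraph and the exclusion of $u<0$ on $B'$ via Lemma~\ref{lemma_moving_polarization} are fine.

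The genuine gap is in your exclusion of the degenerate alternative $u\equiv 0$ on $\tilde B$. You invoke $\mathcal{Y}(u)\subset\overline{\mathrm{supp}\,u^-}$, but this inclusion is not established and cannot be assumed: since unique continuation is not available, $\mathcal{Z}(u)$ may a priori have nonempty interior, and a point realizing the minimal $e_1$-distance $d_1$ need not be a limit of points where $u<0$. In fact, the statement that every neighbourhood of a point of $\mathcal{Y}(u)$ meets $\mathrm{supp}\,u^-$ is precisely what the paper \emph{deduces from the present lemma} at the start of the proof of Lemma~\ref{lemma_contradiction}, so using it here is circular. Even granting it, the final step misapplies the maximum principle: you treat $u$ as a subsolution attaining an interior maximum $0$ at $x$, but $u$ is not known to be $\leqslant 0$ in any neighbourhood of $x$ (on $\tilde B$ you have shown $u\geqslant 0$), so applying the strong maximum principle to $-u$ does not force $u\equiv 0$ near $x$. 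The repair is simpler than what you attempt and is what the paper actually does: no point $w\in B'\cap\Omega$ can belong to $\mathcal{Z}(u)$ at all, because $w-d_1e_1$ lies in the exterior ball $B\subset\reals^N\setminus\overline{\Omega}$, so the horizontal segment from $w$ to $w-d_1e_1$ crosses $\partial\Omega$ at some $w-\alpha e_1$ with $0<\alpha<d_1$, contradicting the minimality of $d_1$. Combined with your exclusion of $u<0$, this gives $u>0$ on the translated ball directly (the paper phrases this as $B(\tilde y+d_1e_1,\varepsilon)\subset\mathrm{supp}\,u^+$), and the Hopf lemma then applies with no dichotomy left to resolve.
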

\begin{proof}
	Fix any $x \in \mathcal{Y}(u)$. Then $\tilde{x} := (x_1 - d_1, x_2, \dots, x_N) \in \partial \Omega$, see Fig.\ \ref{fig1}.
	Note that in view of $(O_4)$ we can find a ball $B(\tilde{y}, \varepsilon)$ which touches $\tilde{x}$ from the outside of $\Omega$. Since we assume that \eqref{eq:dist1} is satisfied, the translated ball $B(\tilde{y} + d_1 e_1, \varepsilon)$ is a subset of $\Omega$ for $\varepsilon>0$ small enough. 
	Moreover, from the definition of $d_1$ it is not hard to deduce that the $e_1$-distance between any $z \in B(\tilde{y}, \varepsilon)$ and $\mathcal{Z}(u)$ is greater than or equal to $d_1$, and it is equal to $d_1$ if and only if $z \in \partial \Omega$ and $z+d_1e_1 \in \mathcal{Y}(u)$. 
	Since we assume that $u>0$ in the neighbourhood of $\partial \Omega$, we conclude that $z+d_1 e_1 \in \mathrm{supp}\,u^+$ and hence $B(\tilde{y} + d_1 e_1, \varepsilon) \subset \mathrm{supp}\,u^+$. 
	
	Recalling now that $u \in C^{1,\delta}_{\text{loc}}(\Omega)$ (see Remark \ref{rem:regularity}), we use the Hopf maximum principle \cite[Theorem~5]{vazquez} to get $|\nabla u(x)| \neq 0$.
	This implies that the $p$-Laplacian is strictly elliptic in a neighbourhood of $x$, which yields that $u$ is $C^2$-smooth in this neighbourhood. Applying the implicit function theorem, we conclude that $\mathcal{Z}(u)$ is a graph of a $C^{2}$-function in a neighbourhood of $x$, and the interior ball condition on $\mathrm{supp}\,u^\pm$ at $x$ follows immediately.
\end{proof}

\begin{lemma}\label{lemma_contradiction}
	Let $(O_1)-(O_4)$ be satisfied. Then $P_{d_1/2} u$ is not a solution of \eqref{D}.
\end{lemma}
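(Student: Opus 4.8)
The plan is to argue by contradiction: assume that $P_{d_1/2}u$ is a weak solution of \eqref{D} and reach a contradiction by showing that its outer normal derivative at one boundary point must be simultaneously positive and non‑positive. Throughout write $\sigma:=\sigma_{d_1/2}$ and $w:=P_{d_1/2}u$, fix $x\in\partial\mathcal Y(u)$, and put $y^*:=\sigma(x)$; $\nu$ will always denote the outer unit normal to $\Omega$ at the boundary point in question. By \eqref{eq:sigma_d_1/2} we have $y^*\in\partial\Omega$, and since $\operatorname{dist}(x,\partial\Omega)\ge d>0$, the reflection $\sigma$ carries a full neighbourhood of $x$ onto a full neighbourhood of $y^*$, so $u\circ\sigma$ is as smooth near $y^*$ as $u$ is near $x$. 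By Lemma~\ref{lemma_contact_set_properties}, $\mathcal Z(u)$ is a $C^2$ hypersurface near $x$, $\nabla u(x)\neq0$, and both $\operatorname{supp}u^+$ and $\operatorname{supp}u^-$ satisfy the interior ball condition at $x$; let $B^-\subset\{u<0\}$ be an interior ball touching $x$, which after shrinking we may take inside $\Sigma_{d_1/2}^-$.

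First I would produce an interior ball for $\Omega$ at $y^*$ and upgrade the regularity there. For $z\in\sigma(B^-)\subset\Sigma_{d_1/2}^+$ one has $u(\sigma(z))<0$, hence $w(z)=\min\{u(z),u(\sigma(z))\}<0$; thus $\sigma(B^-)\subset\{w<0\}\subset\Omega$, and as $x\in\partial B^-$ we get $y^*\in\partial(\sigma(B^-))\cap\partial\Omega$. Therefore $\Omega$ satisfies the interior ball condition at $y^*$, so by $(O_3)$ its boundary is of class $C^{1,\varsigma}$ near $y^*$, and Remark~\ref{rem:regularity} (using the standing assumption that $w$ solves \eqref{D}) gives $u,w\in C^{1,\kappa}$ up to $\partial\Omega$ in a neighbourhood of $y^*$. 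Since $f(s)$ has the sign of $s$, we have $f(w)\le0$ on $\{w<0\}$, so applying the Hopf lemma \cite[Theorem~5]{vazquez} to $w$ on the nodal domain $\{w<0\}$ with interior ball $\sigma(B^-)$ yields
\[
\partial_\nu w(y^*)>0.
\]

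Next I would extract the opposite sign from the positive phase of $w$. By the proof of Lemma~\ref{lemma_moving_polarization}, every $y\in\mathcal Z(u)$ near $x$ satisfies $\sigma(y)\in\overline\Omega$, with $\sigma(y)\in\partial\Omega$ exactly when $y\in\mathcal Y(u)$; hence $\mathcal Z(u)\subset\overline{\sigma(\Omega)}$ near $x$ and $\mathcal Z(u)\cap\sigma(\partial\Omega)=\sigma(\mathcal Y(u))$. Since $B^-\subset\sigma(\Omega)$ (because $\sigma(B^-)\subset\Omega$) and $B^-$ touches $\mathcal Z(u)$ at $x$ from the $\{u<0\}$ side, the side $\{u<0\}$ lies in $\sigma(\Omega)$; as the $C^2$ surface $\mathcal Z(u)$ stays on one side of $\sigma(\partial\Omega)$ and both pass through $x$, the two are tangent at $x$ and $\sigma(\partial\Omega)$ falls on the $\{u>0\}$ side of $\mathcal Z(u)$. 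Because $x\in\partial\mathcal Y(u)$, there exist $p\in\partial\Omega$ with $p\to y^*$ and $\sigma(p)\in\{u>0\}$, i.e. $u(\sigma(p))>0$. For each such $p$, the value $u(\sigma(\cdot))$ is bounded away from $0$ near $p$ while $u\to0$ at $p$, so $w=\min\{u,u\circ\sigma\}=u$ on a one‑sided neighbourhood of $p$ in $\Omega$; hence $\partial_\nu w(p)=\partial_\nu u(p)$, and since $u>0$ near $\partial\Omega$ with $u(p)=0$, the Hopf lemma for $u$ gives $\partial_\nu u(p)<0$. Letting $p\to y^*$ and using the $C^1$ regularity of $w$ up to $\partial\Omega$ near $y^*$ from the previous step, we obtain $\partial_\nu w(y^*)\le0$, which contradicts $\partial_\nu w(y^*)>0$. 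This contradiction shows that $w=P_{d_1/2}u$ cannot solve \eqref{D}.

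I expect the delicate step to be the side determination in the third paragraph: one must be certain that $\sigma(\partial\Omega)$ lies on the positive, rather than the negative, side of $\mathcal Z(u)$ at $x$ — equivalently, that a thin positivity region of $w$ is squeezed between $\partial\Omega$ and the nodal surface $\sigma(\mathcal Z(u))$ accumulating at $y^*$, so that $w$ coincides with the positive function $u$ along a sequence of boundary points approaching $y^*$. This is exactly what the reflection identity $\sigma(\mathcal Z(u))\subset\overline\Omega$ of Lemma~\ref{lemma_moving_polarization} supplies once combined with the $C^2$ structure of $\mathcal Z(u)$, while the relation $x\in\partial\mathcal Y(u)$ is what guarantees that the required boundary points $p\neq y^*$ actually exist. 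A secondary point needing care is the verification, through $(O_3)$–$(O_4)$ and Remark~\ref{rem:regularity}, that $w$ is genuinely $C^1$ up to the boundary at $y^*$, since the whole contradiction hinges on the continuity of $\partial_\nu w$ there.
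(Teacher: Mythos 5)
Your proof is correct and takes essentially the same route as the paper: the reflected ball $\sigma_{d_1/2}(B^-)$ gives an interior tangent ball on which $P_{d_1/2}u<0$, so Hopf yields $\partial_\nu (P_{d_1/2}u)(\sigma_{d_1/2}(x))>0$, while boundary points $p\to\sigma_{d_1/2}(x)$ adjacent to the positive phase give $\partial_\nu (P_{d_1/2}u)(p)\leqslant 0$, and $C^{1,\kappa}$ regularity up to the boundary near $\sigma_{d_1/2}(x)$ closes the contradiction --- exactly the paper's pair \eqref{eq:hopf>}--\eqref{eq:hopf<}. Two cosmetic remarks: at the points $p$ you invoke Hopf for $u$ to get a strict inequality, which would require an interior ball at $p$ itself (not guaranteed by $C^{1,\varsigma}$ regularity there), but the trivial one-sided difference quotient already gives the inequality $\partial_\nu u(p)\leqslant 0$ that you actually use; and the identity ``$\mathcal{Z}(u)\cap\sigma_{d_1/2}(\partial\Omega)=\sigma_{d_1/2}(\mathcal{Y}(u))$'' should read ``$=\mathcal{Y}(u)$''.
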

\begin{proof}
Let us denote, for simplicity, $v := P_{d_1/2} u$. 
Take any $x \in \partial \mathcal{Y}(u)$. By the definition of $\partial \mathcal{Y}(u)$, we have $\sigma_{d_1/2}(x) \in \partial \Omega$.
Fix a small $\varepsilon \in (0,d)$ and consider the neighbourhoods $B(x,\varepsilon)$ and $B(\sigma_{d_1/2}(x),\varepsilon)$, see Fig.~\ref{fig1}.
By Lemma \ref{lemma_contact_set_properties}, the first neighbourhood has nonempty intersection with $\mathrm{supp}\,u^-$, while the second does not (since we assume that $u>0$ in the neighbourhood of $\partial \Omega$). 
Hence, considering any $y \in B(x,\varepsilon) \cap \mathrm{supp}\,u^-$, we obtain that $v(y) = u(\sigma_{d_1/2}(y)) > 0$ and $v(\sigma_{d_1/2}(y)) = u(y) < 0$. Thus, small neighbourhoods of $\sigma_{d_1/2}(x)$ intersect with $\mathrm{supp}\,v^-$. 
Moreover, since $x \in \partial \mathcal{Y}(u)$, we also see that small neighbourhoods of $\sigma_{d_1/2}(x)$ intersect with $\mathrm{supp}\,v^+$ and 
\begin{equation}\label{eq:nonempty_intersection_1}
\partial \Omega \cap \mathrm{supp}\,v^+ \cap B(\sigma_{d_1/2}(x),\varepsilon) \neq \emptyset.
\end{equation}

By Lemma \ref{lemma_contact_set_properties}, $\mathrm{supp}\,u^-$ satisfies the interior ball condition at $x$, which implies that $\mathrm{supp}\,v^-$ and, consequently, $\Omega$ satisfy the interior ball condition at $\sigma_{d_1/2}(x)$.
Therefore, due to $(O_3)$, we get $v \in C^{1,\kappa}_{\text{loc}}(\Omega \cup (\partial \Omega \cap B(\sigma_{d_1/2}(x),\varepsilon)))$ (see Remark \ref{rem:regularity}), and consequently the Hopf maximum principle implies
\begin{equation}\label{eq:hopf>}
	\frac {\partial v}{\partial n}(\sigma_{d_1/2}(x)) > 0,
\end{equation}
where $n$ is the outward unit normal to $\partial \Omega$. 
On the other hand, since \eqref{eq:nonempty_intersection_1} holds, we get for any sufficiently small $\varepsilon > 0$ that 
\begin{equation}\label{eq:hopf<}
	\frac {\partial v}{\partial n}(\hat{x}) \leqslant 0, 
	\quad 
	\forall \hat{x} \in  \partial \Omega \cap \mathrm{supp}\,v^+ \cap B(\sigma_{d_1/2}(x),\varepsilon).
\end{equation}
Finally, taking a sequence $\{\hat{x}_k\}_{k \in \mathbb{N}} \subset \partial \Omega \cap \mathrm{supp}\,v^+ \cap B(\sigma_{d_1/2}(x),\varepsilon)$ such that $\hat{x}_k \to \sigma_{d_1/2}(x)$ as $k \to +\infty$, we obtain a contradiction between \eqref{eq:hopf>} and \eqref{eq:hopf<}, since $v$ is regular up to $\partial \Omega \cap B(\sigma_{d_1/2}(x),\varepsilon)$.
\end{proof}

\begin{remark}
	The idea of a contradiction was inspired by the proof of \cite[Lemma 3.2]{anoopdrabeksasi}, where the authors shown the nonradiality of second eigenfunctions of the zero Dirichlet $p$-Laplacian in a ball. 
\end{remark}

Finally, we see that Lemmas \ref{lemma_moving_polarization} and \ref{lemma_contradiction} contradict each other, which implies that \eqref{eq:dist1} does not hold, and hence Theorem \ref{thm:1} is valid.

\section{Discussion}\label{section:discussion}
The assumptions $(A_1)-(A_4)$ can be relaxed. Indeed, all we need for the proof of Theorem \ref{thm:1} is that: 1) a minimizer of the variational problem \eqref{minimization_problem} exists; 2) any minimizer of \eqref{minimization_problem} is a nodal solution of \eqref{D}; 3) such solutions are sufficiently regular and satisfy the Hopf maximum principle on smooth parts of $\partial \Omega$; 4) polarizations $P_a$ and $\widetilde P_a$ preserve all functionals present in $E$ and $\mathcal{M}(\Omega)$. For example, the assumption $(A_2)$ is used to guarantee the existence part; i.e., since $(A_2)$ holds, the embedding $\wolp(\Omega) \hookrightarrow L^q(\Omega)$ is compact. If we consider the embedding of $\wolp(\Omega)$ into suitable Orlicz spaces, we can impose an extension of $(A_2)$ which allows an exponential growth of $f$ for $p=N$ and arbitrary growth for $p > N$, see, e.g., \cite[(F4) and Lemma 5.6]{Nazar}. The assumption $(A_3)$ can be relaxed to allow, for instance, convex-concave nonlinearities. In this case, the variational problem \eqref{minimization_problem} has to be restricted to suitable subsets of $\mathcal{M}(\Omega)$ (see, e.g., \cite{bobkov}) and then the arguments from the present note can be applied.

\bigskip
\noindent
{\bf Acknowledgements.}
The first author was supported by the project LO1506 of the Czech Ministry of Education, Youth and Sports.
The second author wishes to thank the University of West Bohemia, where this research was started, for the invitation and hospitality.

\addcontentsline{toc}{section}{\refname}
\small

\end{document}